\numberwithin{equation}{section}
\theoremstyle{definition}
\newtheorem{thm}{Theorem}[section]
\theoremstyle{definition}
\newtheorem{lm}[thm]{Lemma}
\theoremstyle{definition}
\theoremstyle{definition}
\theoremstyle{definition}
\newtheorem{df}[thm]{Definition}
\theoremstyle{remark}
\newtheorem{rem}[thm]{Remark}
\newcommand{\Rn}{\mathbb{R}^{n}}
\newcommand{\R}{\mathbb{R}}
\newcommand{\C}{\mathbb{C}}
\newcommand {\grtrsim} {\ {\raise-.5ex\hbox{$\buildrel>\over\sim$}}\ }
\newcommand {\lesssim} {\ {\raise-.5ex\hbox{$\buildrel<\over\sim$}}\ }
\newcommand{\khii}{\text{\lower -.4ex\hbox{$\chi$}}}
\begin{document}
\title[Boundedness of singular integrals]
{Boundedness and convergence for singular integrals of measures separated by
Lipschitz graphs}
\author{Vasilis Chousionis and Pertti Mattila}

\thanks{The first author is supported by the Finnish Graduate School in
Mathematical Analysis.} \subjclass[2000]{Primary 42B20}
\keywords{Singular Integrals}

\begin{abstract}We shall consider the truncated singular integral
operators
\begin{equation*}
T_{\mu ,K}^{\varepsilon}f(x)=\int_{\mathbb{R}^{n}\setminus
B(x,\varepsilon )}K(x-y)f(y)d\mu y
\end{equation*}
and related maximal operators
$T_{\mu,K }^{\ast }f(x)=\underset{\varepsilon >0}{\sup }\left| T_{\mu,K
}^{\varepsilon }f(x)\right|$. We shall prove for a large class of kernels
$K$ and  measures $\mu$ and $\nu$ that if $\mu$ and $\nu$ are separated by a
Lipschitz graph, then $T_{\nu,K }^{\ast }:L^p(\nu)\to L^p(\mu)$ is bounded for
$1<p<\infty$. We shall also show that the truncated operators
$T_{\mu ,K}^{\varepsilon}$
converge weakly in some dense subspaces of $L^2(\mu)$ under mild
assumptions for the measures and the kernels.
\end{abstract}

\maketitle

\section{Introduction}

Let $K:\mathbb{R}^{n}\setminus \{0\}\rightarrow \mathbb{R}$ be some
continuously differentiable function and $\mu $ some finite Radon measure in $%
\mathbb{R}^{n}$. The truncated singular integral operators associated with $%
\mu $ and $K$ are given for $f\in L^1(\mu)$ by

\begin{equation*}
T_{\mu ,K}^{\varepsilon}f(x)=\int_{\mathbb{R}^{n}\setminus B(x,\varepsilon
)}K(x-y)f(y)d\mu y .
\end{equation*}
Here $B(x,\varepsilon)$ is the closed ball centered at $x$ with
radius $\varepsilon$. Since the kernels we are interested in will
remain fixed in the proofs, although the measures might vary, we
will use the notation $ T_{\mu}^{\varepsilon}$ instead of $T_{\mu
,K}^{\varepsilon}$. Following this convention, the maximal singular
integral operator is defined as
\begin{equation*}
T_{\mu }^{\ast }f(x)=\underset{\varepsilon >0}{\sup }\left| T_{\mu
}^{\varepsilon }f(x)\right|.
\end{equation*}

One of the key concepts in the theory of singular integral operators
is $L^2$ boundedness. It is well known that even with very nice
kernels the boundedness of $T_{\mu }^{\ast }:L^2(\mu)\to L^2(\mu)$
requires strong regularity properties of $\mu$. In this paper we
consider two measures $\mu$ and $\nu$ which live on different sides
of some $(n-1)$-dimensional Lipschitz graph. We shall prove that
then $T_{\nu }^{\ast }:L^2(\nu)\to L^2(\mu)$ is bounded very
generally. The case where $\nu =\mathcal{H}^{n-1}\lfloor S$, the
restriction of the $(n-1)$-dimensional Hausdorff measure to a
Lipschitz graph $S$, was proved by David in \cite{D} and our proof
relies on this result. We shall apply our boundedness theorem to
show that the truncated operators $T_{\mu }^{\varepsilon}$ converge
weakly in some dense subspaces of $L^2(\mu)$.

Before stating our main results we give some basic definitions that
determine our setting.
\begin{df}
The class $\Delta $ will contain all finite Radon measures $\mu $ on
$\Rn$ such that
\begin{equation}
\mu (B(x,r))\leq C_{\mu }r^{n-1} \text{ for } x\in\Rn \text{ and } r>0,
\end{equation}
where $C_{\mu }$ is some constant depending on $\mu $.
\end{df}

We restrict to finite Radon measures only for convenience. Since by definition
Radon measures are always locally finite, all our results easily extend
to general Radon measures.

\begin{df}
The class $\mathcal{K}$ will contain all continuously differentiable
kernels $K:\mathbb{R}^{n}$ $\backslash \{0\}\rightarrow \mathbb{R}$
satisfying for all $x\in \mathbb{R}^{n}$ $\backslash \{0\}$,

\begin{enumerate}
\item  $K(-x)=-K(x)$ (Antisymmetry),

\item  $| K(x)| \leq C_{0}^K| x| ^{-(n-1)}$,

\item  $\left| \nabla K(x)\right| \leq C_{1}^K\left| x\right| ^{-n}$,
\end{enumerate}
where the constants $C_{0}^K$ and $C_{1}^K$ depend on $K$.
\end{df}
The classes $\mathcal{K}$ and $\Delta$ have been studied widely, see
e.g. \cite{Db} and the references therein. Notice also that both
$\mathcal{K}$ and $\Delta$ are quite broad. For example, the class
$\Delta$ contains measures supported on
$(n-1)$-dimensional planes and Lipschitz graphs but it also contains
measures whose support is some fractal set like the $1$-dimensional
four corners Cantor set in $\R^2$. Moreover Riesz kernels
$|x|^{-n}x, x\in\Rn$, belong to $\mathcal{K}$, as well as stranger kernels like
the ones appearing in \cite{D3}.

Denote the graph of a function $f:\mathbb{R}
^{n-1}\rightarrow \mathbb{R}$ by
\begin{equation*}
C_{f}=\left\{(x,f(x)): x\in \mathbb{R}^{n-1}\right\}
\end{equation*}
and the corresponding half spaces by
\begin{equation*}
H_{f}^{+}=\{(x,y):x\in \mathbb{R}^{n-1},y>f(x)\}\text{ and }
H_{f}^{-}=\{(x,y):x\in \mathbb{R}^{n-1},y<f(x)\}.
\end{equation*}

Our first main result reads as follows.
\begin{thm}
\label{mthm} Let $f:\mathbb{R}^{n-1}\rightarrow \mathbb{R}$ be some
Lipschitz function and $\mu$ and $\nu$  measures in $\Rn$ such that
\begin{enumerate}
\item $\mu(H_{f}^{-})=\nu(H_{f}^{+})=0$,
\item $\mu,\nu \in \Delta$.
\end{enumerate}
There exist constants $C_p,1\leq p  <\infty,$ depending only on
$p,n,C_{\mu}, C_{\nu}$ and $\textmd{Lip}(f)$ such that for all
$g\in L^1(\nu)$,
\begin{equation*}
\label{mainp} \int (T_{\nu }^{\ast }g)^p d\mu\leq
C_p\int|g|^p d\nu\text{ for }1 <p <\infty.
\frac{C_1}{t}\int |g|d\nu \text{ for }t>0.
\end{equation*}
\end{thm}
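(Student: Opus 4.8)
The plan is to deduce the estimate from David's theorem for the graph measure $\sigma=\HH^{n-1}\lfloor C_f$ by means of a pointwise comparison that exploits the separation of $\mu$ and $\nu$. The basic geometry is the following. Since $\nu(H_f^+)=0$ we have $\spt\nu\subset\overline{H_f^-}$ and $\spt\mu\subset\overline{H_f^+}$, so for $x\in\spt\mu$ and $y\in\spt\nu$ the segment $[x,y]$ crosses $C_f$ and $|x-y|\ge\operatorname{dist}(x,C_f)=:\delta(x)$. Moreover the vertical projection $\hat x=(x',f(x'))$ of $x$ onto the graph satisfies $\delta(x)\le|x-\hat x|\le\sqrt{1+\mathrm{Lip}(f)^2}\,\delta(x)$, and likewise for the projection $\hat y=(y',f(y'))$ of $y$. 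Replacing $x$ by $\hat x$ and $y$ by $\hat y$ therefore moves the arguments of $K$ by controlled amounts, and by (ii)--(iii) the kernel changes by quantities governed by the gradient bound $|\nabla K|\lesssim|\cdot|^{-n}$.

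Before the comparison I would record the boundedness of the relevant maximal function. Let $M_\nu g(x)=\sup_{r>0}r^{-(n-1)}\int_{B(x,r)}|g|\,d\nu$. A Besicovitch covering argument together with the upper bound $\mu(B(x,r))\le C_\mu r^{n-1}$ shows that $M_\nu$ is of weak type $(1,1)$ from $L^1(\nu)$ to $L^{1,\infty}(\mu)$, while $\nu\in\Delta$ gives directly $M_\nu:L^\infty(\nu)\to L^\infty(\mu)$. By Marcinkiewicz interpolation $M_\nu:L^p(\nu)\to L^p(\mu)$ is bounded for every $1<p<\infty$; this already produces both the strong $L^p$ constant and the weak-type $(1,1)$ constant demanded by the statement, provided the rest of the operator can be absorbed into $M_\nu g$.

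The core step is a decomposition of $T_\nu^\varepsilon g(x)$ according to the size of $|x-y|$ relative to $\delta(x)$. On the near region $|x-y|\le\Lambda\delta(x)$ I estimate $K$ by its size bound (ii); because $|x-y|\ge\delta(x)$ there, only boundedly many dyadic scales occur and the contribution is $\lesssim M_\nu g(x)$. On the far region I compare $K(x-y)$ with the graph value $K(\hat x-\hat y)$ through an intermediate term, so that the two differences are controlled by (iii) and, after summing the resulting geometric series in the scales, are again dominated by $M_\nu g(x)$; the truncation forces the remaining genuinely singular term to be evaluated only at scales $\gtrsim\delta(x)$, which is exactly the regularisation needed to keep it finite. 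This last term is a truncated singular integral whose source variable now lives on $C_f$, and it is here that David's theorem enters and furnishes the desired bound, which then transfers back to the maximal operator $T_\nu^{\ast}$ by the standard passage from fixed truncations to the supremum.

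The hard part is the far-region error analysis together with the treatment of the ``vertical'' pairs, where $x$ lies almost directly above $\hat y$. There the projection of $y$ onto the graph collides with the foot of $x$, the crude size bound (ii) loses a logarithm, and the mean value gain from (iii) degenerates because the comparison segment passes close to the origin; simultaneously the push-forwards of $\mu$ and $\nu$ to $C_f$ need not be $(n-1)$-regular, so one cannot simply replace them by $\sigma$. Overcoming this requires a careful multiscale organisation in which the height $\delta(x)$ is used as a cutoff at every scale, the antisymmetry (i) and the $(n-1)$-growth of both measures are invoked to sum the error contributions uniformly in $x$, and only the part matching David's theorem is left untouched. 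Once this balancing is achieved, the interpolation to all exponents $1<p<\infty$ and the weak-type endpoint are routine.
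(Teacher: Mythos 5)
Your overall strategy---compare the operator at $x\in\spt\mu$ with something living on the graph, absorb the errors into the maximal function $M_\nu$, and let David's theorem handle the graph term---is the right one, and your near-region estimate and the $L^p(\nu)\to L^p(\mu)$ boundedness of $M_\nu$ are both correct. But the proposal has a genuine gap exactly where you say the ``hard part'' is, and the difficulty is not one that a ``careful multiscale organisation'' will remove: it is created by the decision to project \emph{both} variables onto $C_f$. If $x$ lies above a point of the graph and $y$ lies below a nearby point, then $|\hat x-\hat y|$ can be zero (or arbitrarily small compared with $|x-y|$) while $|x-y|\approx 1$, so $K(\hat x-\hat y)$ is not even defined on a set of pairs of positive measure, and no mean-value estimate can compare it with $K(x-y)$. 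Moreover, as you yourself observe, the push-forwards of $g\,d\nu$ and of $\mu$ under the vertical projection need not satisfy the growth condition of the class $\Delta$ (a fixed amount of mass high above a tiny disc projects onto that disc), so even the surviving ``graph term'' $\int K(\hat x-\hat y)g(y)\,d\nu y$, integrated in $d\mu x$, is not something David's theorem on $L^p(\HH^{n-1}\lfloor C_f)$ can see. Acknowledging these obstructions and deferring them to an unspecified balancing argument leaves the core of the proof unproved.

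The two ingredients that close these gaps, and which your plan is missing, are the following. First, David's theorem is available in a stronger transference form (Theorem \ref{dav} of the paper): for $\nu\in\Delta$ and $\sigma=\HH^{n-1}\lfloor C_f$ one already has $T_\nu^{\ast}:L^p(\nu)\to L^p(\sigma)$ bounded, so there is no need to move the integration variable $y$ at all; one only compares the value of $T_\nu^{\varepsilon}g$ at a point off the graph with its value at a point of the graph, and that single-variable comparison is a routine Cotlar-type inequality (the paper's Lemma \ref{l2}), since the relevant displacement is $\lesssim\operatorname{dist}(x,C_f)\le|x-z|$ for $z\in\spt\nu$. Second, to convert the resulting $L^p(\sigma)$ bound on the graph into an $L^p(\mu)$ bound one cannot use the vertical projection (for the push-forward reason above); the correct device is the non-tangential maximal function over cones together with the Carleson embedding $\int|u|^p\,d\mu\lesssim\int_{C_f}N(u)^p\,d\HH^{n-1}$, which holds precisely because $\mu\in\Delta$ makes $\mu$ a Carleson measure for $H_f^{+}$ (the paper's Lemma \ref{nc}). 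Finally, a complete proof must also dispose of the parts of $\mu$ and $\nu$ charging $C_f$ itself (they are absolutely continuous with respect to $\sigma$ with bounded densities, after which Theorems \ref{bl} and \ref{dav} apply directly); your proposal does not address this case, and your $\delta(x)$-based decomposition degenerates there.
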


The proof is based on the following two theorems. The first one is a
special case of a classical result, for related discussion and
references see \cite{DS}, p.13. The second was proved by David in
\cite{D}. Although David worked only in the plane, his proof
generalizes without any essential changes.

\begin{thm}
\label{bl} Let $S\subset \mathbb{R}^{n}$ be some $(n-1)$-dimensional
Lipschitz graph and let $\sigma =\mathcal{H}^{n-1}\lfloor S$. Then
if $K\in \mathcal{K}$ the corresponding maximal operator
\begin{equation*}
T_{\sigma }^{\ast }:L^{p}(\sigma )\rightarrow L^{p}(\sigma )
\end{equation*}
is bounded for $1 <p <\infty$.
\end{thm}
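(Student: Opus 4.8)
The plan is to reduce the $L^p$ statement for the \emph{maximal} operator to a single uniform $L^2$ bound for the \emph{truncated} operators, and to obtain the latter from a $T(1)$-type argument. First I would record that $(S,|\cdot|,\sigma)$ is a space of homogeneous type: because $S$ is an $(n-1)$-dimensional Lipschitz graph, the measure $\sigma=\mathcal{H}^{n-1}\lfloor S$ is Ahlfors regular, $\sigma(B(x,r))\approx r^{n-1}$ for $x\in S$ and $0<r\leq\diam S$. Conditions (ii) and (iii) defining $\mathcal{K}$ then say precisely that the restriction of $(x,y)\mapsto K(x-y)$ to $S\times S$ is a standard Calderón--Zygmund kernel of dimension $n-1$ on this space: (ii) gives the size bound $|K(x-y)|\lesssim|x-y|^{-(n-1)}$, and (iii) gives, via the mean value theorem, the Hörmander regularity $|K(x-y)-K(x-y')|\lesssim|y-y'|\,|x-y|^{-n}$ whenever $|y-y'|\leq\tfrac12|x-y|$. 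Thus, once I have the uniform bound $\sup_{\eps>0}\|T_\sigma^\eps\|_{L^2(\sigma)\to L^2(\sigma)}<\infty$, the remaining assertions will follow from general Calderón--Zygmund machinery.

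The heart of the matter, and the step I expect to be hardest, is this uniform $L^2(\sigma)$ bound. I would obtain it from the David--Journé $T(1)$ theorem on the space of homogeneous type $(S,\sigma)$. Parametrizing $S$ over the base hyperplane by $x\mapsto(x,f(x))$ transports $T_\sigma$ to a singular integral on $\hn$ whose kernel is controlled, through $\mathrm{Lip}(f)$, by a standard antisymmetric kernel. The antisymmetry (i) is decisive: the formal adjoint of $T_\sigma$ is $-T_\sigma$, so the two BMO hypotheses of the $T(1)$ theorem collapse into one, and it suffices to check the weak boundedness property together with $T_\sigma 1\in\mathrm{BMO}(\sigma)$. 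The weak boundedness property is routine from (ii)--(iii) by testing on normalized bumps and cutting off at the truncation scale. Verifying $T_\sigma 1\in\mathrm{BMO}(\sigma)$ is exactly where the Lipschitz geometry of $S$ must be used in an essential way; this is the Coifman--McIntosh--Meyer phenomenon underlying David's theorem, and I would expect to spend the bulk of the work estimating $\mathrm{p.v.}\int_S K(x-y)\,d\sigma y$ over Carleson boxes.

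Granted the uniform $L^2$ bound, I would finish with standard arguments. Cotlar's inequality on $(S,\sigma)$ gives the pointwise control $T_\sigma^\ast f\lesssim M(T_\sigma f)+Mf$, where $M$ is the Hardy--Littlewood maximal operator and $T_\sigma$ the principal value operator; together with the $L^2$-boundedness of $M$ this yields $T_\sigma^\ast:L^2(\sigma)\to L^2(\sigma)$. The Calderón--Zygmund decomposition, using the kernel estimates above, then produces a weak $(1,1)$ estimate for $T_\sigma^\ast$; Marcinkiewicz interpolation gives boundedness for $1<p\leq2$, and the range $2<p<\infty$ follows by duality, once more invoking the antisymmetry so that the adjoint is again of the same type. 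This gives $T_\sigma^\ast:L^p(\sigma)\to L^p(\sigma)$ for all $1<p<\infty$, as claimed.
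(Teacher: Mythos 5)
You should first note that the paper does not prove this statement at all: it is quoted as ``a special case of a classical result'' with a pointer to \cite{DS}, p.~13, so there is no internal proof to compare against. Judged on its own terms, your outline reproduces the standard architecture (a uniform $L^{2}(\sigma)$ bound for the truncations via $T(1)$ on the space of homogeneous type $(S,\sigma)$, Cotlar's inequality for the maximal operator, Calder\'on--Zygmund decomposition, interpolation and duality for $1<p<\infty$), and those peripheral steps are sound. One small imprecision: the weak boundedness property does not follow from the size and gradient bounds (ii)--(iii) alone; it is the antisymmetry (i) that makes $\langle T_{\sigma}\phi,\psi\rangle=\tfrac12\iint K(x-y)\bigl(\phi(y)\psi(x)-\phi(x)\psi(y)\bigr)\,d\sigma y\,d\sigma x$ absolutely convergent, since the bracket vanishes to first order on the diagonal.

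The genuine gap is the step you explicitly defer: verifying $T_{\sigma}1\in\mathrm{BMO}(\sigma)$. For a general odd kernel $K\in\mathcal{K}$ on a Lipschitz graph this is not a residual computation to be filed under ``the bulk of the work''; it is essentially equivalent to the theorem itself, and it is precisely where every known proof injects a substantial new idea: the Coifman--McIntosh--Meyer multilinear expansion in powers of $\nabla f$ (unavailable here, since a general $K\in\mathcal{K}$ need not depend analytically on the graph), David's induction on the Lipschitz constant via good-$\lambda$ inequalities and big pieces, the corona construction of \cite{DS}, or, for the Cauchy kernel only, the Menger curvature identity. As written, your proposal reduces the theorem to an unproved assertion that is as hard as the theorem. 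To close it you would either have to carry out one of these arguments in full or, as the paper does, simply invoke the result as known; what you have is a correct road map of the reduction to $T(1)$, not a proof.
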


\begin{thm}
\label{dav} Let $K\in \mathcal{K}$ and $\mu,\sigma \in \Delta $.
Suppose that there exists a positive constant $c_{\sigma}$
such that $\sigma(B(x,r))\geq c_{\sigma}r^{n-1}$ for $x$ in the support of
$\sigma$ and for $0<r<1$, and that
\begin{equation*}
T_{\sigma }^{\ast }:L^{p}(\sigma )\rightarrow L^{p}(\sigma )
\end{equation*}
is bounded for $1 <p <\infty$. Then
\begin{equation*}
T_{\sigma }^{\ast }:L^{p}(\sigma )\rightarrow L^{p}(\mu )\text{ and
}T_{\mu }^{\ast }:L^{p}(\mu )\rightarrow L^{p}(\sigma )
\end{equation*}
are also bounded for $1 <p <\infty$.
\end{thm}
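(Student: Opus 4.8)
The plan is to deduce both mapping properties from a single pointwise comparison principle, using that the hypotheses make $\sigma$ Ahlfors--David regular (the upper bound from $\Delta$, the lower bound from the hypothesis on small balls) while $\mu\in\Delta$ forces $\mu$ to be a Carleson measure over $\spt\sigma$. Throughout, $M_\sigma$ denotes the Hardy--Littlewood maximal operator for $\sigma$, which is bounded on $L^p(\sigma)$ since $\sigma$ is doubling.

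For the first statement, $T_\sigma^*:L^p(\sigma)\to L^p(\mu)$, fix $f\in L^p(\sigma)$ and $x\notin\spt\sigma$, and let $z=z_x\in\spt\sigma$ be nearest to $x$, with $R=\mathrm{dist}(x,\spt\sigma)=|x-z|$. For each truncation radius $\varepsilon$ I would compare $T_\sigma^\varepsilon f(x)$ with $T_\sigma^{\max(\varepsilon,2R)}f(z)$. The symmetric difference of the two excluded balls is an annulus on which $|K|\lesssim\max(\varepsilon,R)^{-(n-1)}$, controlled via the upper regularity of $\sigma$; on the common region the kernel difference obeys $|K(x-y)-K(z-y)|\lesssim R|z-y|^{-n}$ by the gradient bound, and the extra factor $R/|z-y|$ both makes the dyadic sum converge and confines the relevant scales to $\grtrsim R$. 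Summing, both errors are dominated by the restricted maximal function $M_\sigma^R f(z)=\sup_{\rho\ge R}\sigma(B(z,\rho))^{-1}\int_{B(z,\rho)}|f|\,d\sigma$, so that
\[
T_\sigma^* f(x)\lesssim\sup_{\varepsilon\ge R}|T_\sigma^\varepsilon f(z)|+M_\sigma^R f(z).
\]
The crucial feature is that the right-hand side decays as $R\to\infty$; the cruder bound by $T_\sigma^* f(z)+M_\sigma f(z)$ would be true but useless, as it would make the ensuing sum over scales diverge.

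The second step is to transfer this to the measure $\mu$. Decomposing $\Rn\setminus\spt\sigma$ into the shells $E_j=\{2^j\le d(x)<2^{j+1}\}$ and writing $G^R$ for the right-hand side above, I would use that $G^R$ is essentially constant at scale $R$ together with the lower regularity $\sigma(B(z,R))\grtrsim R^{n-1}$ to bound $G^{2^j}(z_x)^p$ by its $\sigma$-average over $B(z_x,2^j)$, and then the growth bound to run a Fubini argument. The Carleson packing estimate $\mu(\{x\in E_j:|z_x-z'|<2^j\})\lesssim 2^{j(n-1)}\sim\sigma(B(z',2^j))$ exactly absorbs the normalising factor, and the decay of $G^{2^j}$ makes the $j$-sum converge (the scales $\grtrsim 1$, where lower regularity is unavailable, being handled directly by this decay and the finiteness of the measures), yielding
\[
\int (T_\sigma^* f)^p\,d\mu\lesssim\int_{\spt\sigma}(T_\sigma^* f)^p\,d\sigma+\int_{\spt\sigma}(M_\sigma f)^p\,d\sigma\lesssim\int|f|^p\,d\sigma
\]
by the hypothesis on $T_\sigma^*$ and the boundedness of $M_\sigma$. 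Making this Carleson transfer work---arranging just enough decay for the shell sum to converge---is the main obstacle of this direction.

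For the converse $T_\mu^*:L^p(\mu)\to L^p(\sigma)$ I would first use the antisymmetry of $K$: for each fixed $\varepsilon$,
\[
\int (T_\mu^\varepsilon g)\,h\,d\sigma=-\int g\,(T_\sigma^\varepsilon h)\,d\mu,
\]
so that $\|T_\mu^\varepsilon\|_{L^p(\mu)\to L^p(\sigma)}=\|T_\sigma^\varepsilon\|_{L^{p'}(\sigma)\to L^{p'}(\mu)}$, which the first part (with exponent $p'$) bounds uniformly in $\varepsilon$. Passing from this uniform control of single truncations to the maximal operator $T_\mu^*$ is where the real work lies: I would dominate $T_\mu^* g$ by a fixed truncation of $T_\mu g$ together with a maximal function of $g$ through a Cotlar-type inequality, and then carry both terms over to $L^p(\sigma)$ by the Carleson transfer of the first part. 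Since $\mu$ is only upper regular, the classical Cotlar inequality does not apply and one must instead use its non-homogeneous form for measures satisfying only a growth condition; establishing this maximal-operator upgrade is the principal difficulty of the converse.
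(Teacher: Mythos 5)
The paper does not actually prove Theorem \ref{dav}: it is quoted from David \cite{D} (see also \cite{D88}), so there is no internal proof to compare against; I judge your proposal on its own terms and against the mechanism the paper itself uses for the closely related Theorem \ref{mthm}. Your pointwise comparison $T_\sigma^*f(x)\lesssim\sup_{\varepsilon\geq R}|T_\sigma^\varepsilon f(z_x)|+M_\sigma^Rf(z_x)$ is correct, and the duality identity for fixed truncations in the converse direction is correct. The gap is in the ``Carleson transfer.'' Your shell-by-shell Fubini gives, for each $j$, a contribution $\lesssim\int(G^{C2^j})^p\,2^{-j(n-1)}\mu(A_j^w)\,d\sigma(w)$ with $\mu(A_j^w)\lesssim2^{j(n-1)}$, i.e.\ each shell contributes a full copy of $\int(G^{C2^j})^p\,d\sigma$. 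The decay of $G^R$ is available only as $R\to\infty$ (and does handle the shells with $2^j\grtrsim1$); as $R\to0$, $G^R$ \emph{increases} to $T_\sigma^*f+M_\sigma f$, so the sum over the small shells $j\leq0$ has infinitely many terms of unit size and diverges. Concretely, in the plane take $\sigma$ to be length on a segment of the $x$-axis and $\mu$ length on $\{(t,t):0<t\leq1\}$: then $\mu(A_j^w)\sim2^j$ for every $j\leq0$ when $w$ is near the origin, the total weight $\sum_j2^{-j(n-1)}\mu(A_j^w)$ behaves like $\log(1/|w|)$, and $\int H^p\log(1/|w|)\,d\sigma(w)$ is not controlled by $\int H^p\,d\sigma$. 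The correct mechanism --- the one David uses, and the one this paper uses in proving Theorem \ref{mthm} via Lemmas \ref{nc} and \ref{l2} --- is to dominate $T_\sigma^*f$ off $\spt\sigma$ by a non-tangential maximal function $N(\cdot)$ over cones based on $\spt\sigma$ and invoke the Carleson embedding $\int|u|^p\,d\mu\lesssim\int N(u)^p\,d\sigma$, which is proved by a level-set and covering argument ($\mu\{|u|>\lambda\}\lesssim\sigma\{N(u)>\lambda\}$), not scale by scale: the supremum over the cone is paid once per level set rather than once per dyadic scale, which is precisely what your decomposition loses.

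In the converse direction the maximal-operator upgrade is not merely ``the principal difficulty'' left for later: the tool you propose does not apply. The non-homogeneous Cotlar inequality of Nazarov--Treil--Volberg presupposes that $T_\mu$ itself is bounded on $L^2(\mu)$ (same measure on both sides), which is not among the hypotheses and typically fails here --- $\mu$ is only assumed to have the upper growth bound and may be purely unrectifiable. What your duality step actually provides is uniform boundedness of the truncations $T_\mu^\varepsilon:L^p(\mu)\to L^p(\sigma)$, and the Cotlar-type comparison must therefore be run relative to $\sigma$: for $x\in\spt\sigma$ one compares $T_\mu^\varepsilon g(x)$ with $\sigma$-averages of truncations of $T_\mu g$ over $B(x,\varepsilon)\cap\spt\sigma$, using the lower regularity $\sigma(B(x,\varepsilon))\grtrsim\varepsilon^{n-1}$ to make the averaging meaningful. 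That step is the real content of David's argument for this half of the theorem, and your proposal leaves it as a black box with an inapplicable reference.
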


\begin{rem} The antisymmetry assumption on Theorem \ref{mthm} is not essential in the following sense.
As it was observed in \cite{D88}, Theorem \ref{dav} holds for all kernels $K:\Rn \times \Rn \setminus \{(x,y):x=y\} \rightarrow \R$
which satisfy the estimates
\begin{equation*}
|K(x,y)| \leq C|x-y|^{-(n-1)}
\end{equation*}
and
\begin{equation*}
|\nabla_x K(x,y)|+|\nabla_y K(x,y)|\leq C|x-y|^{-n}.
\end{equation*}
It is evident from the proof of Theorem \ref{mthm} that it remains
true for any of the aforementioned kernels $K$ whose corresponding
maximal operator $T^*_K$ is bounded on
$L^{2}(\mathcal{H}^{n-1}\lfloor C_f)$. As in Theorem \ref{mthm}
$C_f$ stands for the Lipschitz graph that separates the two measures
$\mu$ and $\nu$.
\end{rem}

We shall apply Theorem \ref{mthm} to obtain certain weak convergence results.
Recently it was shown in \cite{MV} that for general
measures and kernels the $L^2(\mu)$-boundedness of the
operators $T_{\mu ,K}^{\varepsilon}$ forces them to converge weakly
in $L^2(\mu)$. This means that there exists a bounded linear
operator $T_{\mu,K }:L^2(\mu)\rightarrow L^2(\mu)$ such that for all
$f,g \in L^2(\mu)$,
\begin{equation*}
\lim _{\varepsilon \rightarrow 0} \int T_{\mu,K }^{\varepsilon
}(f)gd\mu=\int T_{\mu,K }(f)gd\mu .
\end{equation*}
Motivated by this recent development it is natural to ask if limits
of this type might exist if we remove the very strong
$L^2$-boundedness assumption. But, as it was remarked in \cite{MV},
by the Banach-Steinhaus theorem the converse also holds often; weak
convergence implies $L^2$-boundedness. And $L^2$-boundedness is
known to fail very often, for example, by \cite{MeV} and \cite{L},
if $K$ is the Cauchy kernel, $K(z)=1/z, z\in\C$, and  $\mu$ has
positive and finite 1-upper density, i.e, $$0 <\limsup_{r\rightarrow
0}\frac{\mu(B(x,r))}{r} <\infty \text{ $\mu$ a.e,}$$ and is purely
unrectifiable, that is, $\mu(\Gamma)=0$ for every rectifiable curve
$\Gamma$. Hence we cannot hope for the full weak convergence in
$L^2(\mu)$ in such cases. However, we shall prove that the operators
$T_{\mu ,K}^{\varepsilon}$ converge weakly in a restricted sense,
see Theorem \ref{main}, under some mild assumptions for the measures
and the kernels, including also many purely unrectifiable measures.

For these convergence results we shall also use the following theorem.
It was first proved in \cite{MM} for the Cauchy transform in the plane, and then by
a different method by Verdera in \cite{V}. Verdera's proof easily extends
to the present setting, one can also consult \cite{M}, Section 20.

\begin{thm}
\label{pv} Let $S\subset \mathbb{R}^{n}$ be some $(n-1)$-dimensional
Lipschitz graph. Then if $K\in \mathcal{K}$ and $\nu$ is any finite Radon
measure in $\mathbb{R}^{n}$, the principal values
\begin{equation*}
\underset{\varepsilon \rightarrow 0}{\lim }\underset{\left| x-y\right|
>\varepsilon }{\int }K(x-y)d\nu y
\end{equation*}
exist and are finite for $\mathcal{H}^{n-1}$ almost all $x\in S$.
\end{thm}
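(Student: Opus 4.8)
The plan is to deduce the existence of the principal values from the $L^p(\sigma)$-boundedness of $T^\ast_\sigma$ supplied by Theorem \ref{bl}, where $\sigma=\HH^{n-1}\lfloor S$. For a finite measure $\nu$ write $T^\varepsilon\nu(x)=\int_{|x-y|>\varepsilon}K(x-y)\,d\nu y$ and $T^\ast\nu=\sup_{\varepsilon>0}|T^\varepsilon\nu|$. Since $S$ is a Lipschitz graph, $\sigma$ is Ahlfors regular, $\sigma(B(x,r))\approx r^{n-1}$ for $x\in S$, so $(S,\sigma)$ carries a Calderón--Zygmund structure. The engine I would establish first is the weak type $(1,1)$ bound
\[
\sigma(\{x\in S:\,T^\ast\nu(x)>t\})\le \frac{C}{t}\,\|\nu\|,\qquad t>0,
\]
valid for every finite Radon measure $\nu$, with $C$ depending only on $n$, $\mathrm{Lip}(f)$ and $K$. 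I would obtain it from the $L^2(\sigma)$-boundedness of $T^\ast_\sigma$ (Theorem \ref{bl}) by a Calderón--Zygmund decomposition of $\nu$ relative to $\sigma$: the good part is handled by Theorem \ref{bl}, while the bad part, a sum of mean--zero pieces carried by dyadic cubes, is controlled off the enlarged cubes by the smoothness estimate $|\nabla K(x)|\le C_1^K|x|^{-n}$ together with the antisymmetry of $K$. This is the two--measure weak type estimate implicit in David's argument.

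With this in hand I would follow the standard pattern for almost everywhere convergence. First, only arbitrarily small scales matter: for fixed $r$ the far contribution $\int_{|x-y|>r}K(x-y)\,d\nu y$ is independent of $\varepsilon<r$, so the behaviour of $T^\varepsilon\nu(x)$ as $\varepsilon\to0$ depends only on $\nu\lfloor B(x,r)$. Next, by the Radon--Nikodym theorem decompose $\nu=g\sigma+\nu_s$ with $g\in L^1(\sigma)$ and $\nu_s\perp\sigma$. For the absolutely continuous part the principal value of $T(g\sigma)$ exists $\sigma$-a.e.: the base case of bounded $g$ is settled by the $C^1$ geometry of the graph and the antisymmetry of $K$ through Theorem \ref{bl}, and one passes to general $g\in L^1(\sigma)$ via the weak type $(1,1)$ inequality for $T^\ast_\sigma$, exactly as in the maximal--function proof of the Lebesgue differentiation theorem.

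The crux, and the step I expect to be hardest, is to show directly that the oscillation
\[
\mathcal O\nu(x)=\limsup_{\varepsilon,\delta\to0}\bigl|T^\varepsilon\nu(x)-T^\delta\nu(x)\bigr|
\]
vanishes $\sigma$-a.e.; this cannot be reduced to a dense subclass, because $\nu$ may have a $\sigma$-singular continuous part, which is not approximable in total variation by absolutely continuous measures. Instead I would argue through the Calderón--Zygmund decomposition itself. Fixing a large $t$, write $\nu=g_t+b_t$ with $g_t\in L^2(\sigma)$ and $b_t=\sum_i b_i$ a sum of mean--zero pieces supported on cubes $Q_i$ with $\sum_i\sigma(Q_i)\le C\|\nu\|/t$. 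The good part satisfies $\mathcal O g_t=0$ $\sigma$-a.e. by the absolutely continuous case, so $\mathcal O\nu\le\mathcal O b_t$ off a null set. For $x$ outside the enlarged cubes $\bigcup_i 2Q_i$, the mean--zero property of each $b_i$ and the estimate $|\nabla K|\le C_1^K|\cdot|^{-n}$ give $|T^\varepsilon b_t(x)-T^\delta b_t(x)|\to0$, hence $\mathcal O b_t(x)=0$ there, while $\sigma\bigl(\bigcup_i 2Q_i\bigr)\le C'\|\nu\|/t$. Letting $t\to\infty$ forces $\mathcal O\nu=0$ $\sigma$-a.e. Combined with the finiteness of $T^\ast\nu$ from the weak type inequality, this yields the existence and finiteness of the principal value for $\HH^{n-1}$-almost every $x\in S$. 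The essential difficulty throughout is that the singular integral is tested on the graph $S$ against an arbitrary ambient measure $\nu$, carrying no $L^2(\nu)$ information, which is precisely what forces the use of the gradient estimate and the antisymmetry of $K$ in place of any smoothness of $\nu$.
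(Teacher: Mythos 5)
First, a point of reference: the paper does not prove Theorem \ref{pv} at all; it quotes it as a known result, first proved in \cite{MM} for the Cauchy transform and then by a different method by Verdera \cite{V}, whose argument (see also \cite{M}, Section 20) extends to this setting. So your proposal must be measured against that standard proof, whose overall architecture (a weak type $(1,1)$ inequality $\sigma\{x: T^{\ast}\nu(x)>t\}\leq C\|\nu\|/t$ for \emph{arbitrary} finite measures $\nu$, followed by an almost-everywhere convergence argument) you have correctly identified. Both halves of your execution, however, contain genuine gaps. For the weak type inequality: a ``Calder\'on--Zygmund decomposition of $\nu$ relative to $\sigma$'' into mean-zero pieces carried by cubes does not exist as you describe it, because $\nu$ is an arbitrary measure in $\mathbb{R}^n$: a cube $Q_i$ carrying positive $\nu$-mass may have $\sigma(Q_i)=0$ (for instance when $\nu$ is supported entirely off the graph, which is exactly the case needed for Theorem \ref{bli}), so there is no multiple of $\sigma\lfloor Q_i$ to subtract. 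The known proofs fix this by sweeping the mass of each bad cube onto a nearby portion of the graph (Verdera) or by non-doubling techniques (\cite{NTV}); calling it ``implicit in David's argument'' is off the mark, since Theorem \ref{dav} requires both measures to satisfy the growth condition, whereas here $\nu$ is completely arbitrary.

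The more serious gap is in your crux step. For $x\notin\bigcup_i 2Q_i$, the mean-zero property of $b_i$ together with $|\nabla K|\lesssim|\cdot|^{-n}$ controls only the \emph{untruncated} integrals $\int K(x-y)\,db_i(y)$, i.e.\ the pieces lying entirely outside $B(x,\varepsilon)$. For the (possibly infinitely many) bad cubes accumulating at $x$ that are \emph{cut} by the sphere $\partial B(x,\varepsilon)$, the truncation destroys the cancellation, and the only available bound for their total contribution is of order $\varepsilon^{-(n-1)}\nu(B(x,C\varepsilon))$, the upper $(n-1)$-density of $\nu$ at $x$. This does not tend to zero: it is comparable to $g(x)$ wherever the absolutely continuous part $g\sigma$ of $\nu$ is nonzero, and by placing point masses $m_i\to 0$ at points of $S$ accumulating at a point $p$ one can make it of size $t$, or even infinite, at a point $p$ lying outside every doubled cube. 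So the implication you assert (mean zero plus gradient bound $\Rightarrow$ oscillation vanishes off $\bigcup_i 2Q_i$) is false pointwise, and its almost-everywhere version requires exactly the differentiation-theoretic input you never invoke: the Besicovitch differentiation theorem giving $\lim_{r\to 0}r^{1-n}\nu_s(B(x,r))=0$ for $\sigma$-a.e.\ $x$ when $\nu_s\perp\sigma$, plus a Lebesgue point argument for the part of $g$ sitting on the bad cubes. Finally, the premise that forced you into this detour is itself incorrect: the singular part \emph{is} approximable in total variation by a class for which principal values are trivial. By inner regularity, choose compact $F$ with $\sigma(F)=0$ and $\|\nu_s-\nu_s\lfloor F\|<\eta$; for the measure $\nu_s\lfloor F$ the truncations stabilize at every $x\notin F$ (hence $\sigma$-a.e.), and the remainder is killed by the weak type inequality on letting $\eta\to 0$. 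This compact-carrier argument is the standard few-line conclusion in \cite{MM} and \cite{M}, Section 20, and replaces your entire third step.
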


Using Theorems \ref{mthm} and \ref{pv} we are able to prove rather easily the following fact.
\begin{thm}
\label{first}
\label{bli}
Let $\mu \in \Delta$ and $K \in \mathcal{K}$. Then for any Lipschitz function $f:\mathbb{R}%
^{n-1}\rightarrow \mathbb{R}$ the finite limit
\begin{equation}
\underset{\varepsilon \rightarrow 0}{\lim }\underset{\left|
x-y\right|
>\varepsilon }{\int_{\Rn\backslash H_{f}^{-}}\int_{H_{f}^{-}}}K(x-y)d\mu
yd\mu x
\end{equation}
exists.
\end{thm}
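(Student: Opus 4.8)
The plan is to split the outer measure and recognise the double integral as the $\tilde\mu$-integral of a truncated singular integral of the constant function $1$, then to establish pointwise convergence of the integrand together with an integrable majorant so that dominated convergence applies. Write $\tilde\mu=\mu\lfloor(\Rn\setminus H_f^-)$ and $\nu=\mu\lfloor H_f^-$. Both are finite Radon measures, and since the growth condition defining $\Delta$ is inherited by restrictions, $\tilde\mu,\nu\in\Delta$ with the same constant $C_\mu$. Moreover $\tilde\mu(H_f^-)=0$ and $\nu(H_f^+)=0$, so the pair $(\tilde\mu,\nu)$ satisfies the hypotheses of Theorem \ref{mthm}. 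With $T_\nu^\varepsilon 1(x)=\int_{|x-y|>\varepsilon}K(x-y)\,d\nu y$, the quantity in question is exactly $\int T_\nu^\varepsilon 1\,d\tilde\mu$, so it suffices to prove that this converges as $\varepsilon\to 0$.

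For the pointwise convergence of $T_\nu^\varepsilon 1(x)$ I would distinguish two cases. If $x\in H_f^+$, then $x$ lies at a positive distance $d$ from the closed set $\overline{H_f^-}$, which contains $\spt\nu$; hence for every $\varepsilon<d$ the truncation removes nothing and $T_\nu^\varepsilon 1(x)=\int_{H_f^-}K(x-y)\,d\nu y$, an absolutely convergent integral (the kernel is bounded by $C_0^K d^{-(n-1)}$ on $\spt\nu$ and $\nu$ is finite), so the limit exists trivially. If $x$ lies on the graph $C_f$, then the existence of $\lim_{\varepsilon\to 0}T_\nu^\varepsilon 1(x)$ for $\mathcal{H}^{n-1}$-almost every such $x$ is precisely Theorem \ref{pv} applied to the finite Radon measure $\nu$. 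To convert this into a $\tilde\mu$-almost everywhere statement I would use the elementary fact that the growth condition $\mu(B(x,r))\le C_\mu r^{n-1}$ forces $\mu\ll\mathcal{H}^{n-1}$: covering any $\mathcal{H}^{n-1}$-null set by balls with $\sum_i r_i^{n-1}$ arbitrarily small and summing the growth estimate shows the set is $\mu$-null. Consequently the exceptional set on $C_f$ is $\tilde\mu$-null, and $T_\nu^\varepsilon 1$ converges $\tilde\mu$-almost everywhere.

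It remains to produce an integrable majorant. Applying Theorem \ref{mthm} to the pair $(\tilde\mu,\nu)$ with $g\equiv 1\in L^p(\nu)$ (legitimate since $\nu$ is finite) gives $\int (T_\nu^\ast 1)^p\,d\tilde\mu\le C_p\,\nu(\Rn)<\infty$ for any $1<p<\infty$; taking $p=2$ and using finiteness of $\tilde\mu$ yields $T_\nu^\ast 1\in L^2(\tilde\mu)\subset L^1(\tilde\mu)$. Since $|T_\nu^\varepsilon 1|\le T_\nu^\ast 1$ for every $\varepsilon$, the dominated convergence theorem applies and shows that $\int T_\nu^\varepsilon 1\,d\tilde\mu$ converges, with finite limit $\int(\lim_{\varepsilon\to0}T_\nu^\varepsilon 1)\,d\tilde\mu$, which is the assertion.

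The only genuinely delicate point is the passage from $\mathcal{H}^{n-1}$-almost everywhere to $\tilde\mu$-almost everywhere convergence on the graph; everything else is bookkeeping. I would therefore isolate the absolute continuity $\mu\ll\mathcal{H}^{n-1}$ as the key observation, since without it Theorem \ref{pv} could not be transferred to the measure $\tilde\mu$ that actually appears in the integral.
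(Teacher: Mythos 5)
Your argument is correct and follows essentially the same route as the paper: split off $\nu=\mu\lfloor H_f^{-}$, view the double integral as $\int T_{\nu}^{\varepsilon}(1)\,d\tilde\mu$, get an integrable majorant from Theorem \ref{mthm} applied to $g\equiv 1$, obtain pointwise convergence trivially on $H_f^{+}$ and via Theorem \ref{pv} on $C_f$, and conclude by dominated convergence. The only difference is that you make explicit the transfer from $\mathcal{H}^{n-1}$-a.e.\ to $\tilde\mu$-a.e.\ convergence on the graph via $\mu\ll\mathcal{H}^{n-1}$, which the paper leaves implicit (it is justified by the same growth-condition argument in the proof of Theorem \ref{mthm}).
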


Theorem \ref{bli} is the main tool used to establish weak
convergence. Consider the following function spaces, which are dense
subsets of $L^2(\mu)$ for $\mu \in \Delta$,
\begin{equation*}
\begin{split}
\mathcal{X}_{Q}(\mathbb{R}^{n})=\{f:\mathbb{R}^{n}\rightarrow
\mathbb{R}, f \text{ is a finite linear combination of
characteristic}\\
\text{ functions of rectangles in } \mathbb{R}^{n}\}
\end{split}
\end{equation*}
and
\begin{equation*}
\begin{split}
\mathcal{X}_{B}(\mathbb{R}^{n})=\{f:\mathbb{R}^{n}\rightarrow
\mathbb{R}, f \text{ is a finite linear combination of
characteristic}\\
\text{ functions of balls in } \mathbb{R}^{n}\}.
\end{split}
\end{equation*}
Rectangles in $\mathcal{X}_{Q}$ need not have their sides parallel
to the axis.
\begin{thm}
\label{main} If $\mu \in \Delta $ and $K\in \mathcal{K}$, the finite limit
\begin{equation*}
\underset{\varepsilon \rightarrow 0}{\lim }\int T_{\mu }^{\varepsilon
}(f)(x)g(x)d\mu x
\end{equation*}
exists for $f,g\in \mathcal{X}_{B}(\mathbb{R}^{n})$ and $f,g\in
\mathcal{X}_{Q}(\mathbb{R}^{n})$.
\end{thm}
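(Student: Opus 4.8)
The plan is to reduce, by bilinearity of $f\mapsto T_\mu^\varepsilon f$ and of integration in $g$, to the case $f=\chi_A$, $g=\chi_B$ for single balls (respectively rectangles) $A,B$, and to analyse the bilinear form
\begin{equation*}
J_\varepsilon(V,W)=\int_W\int_{V,\,|x-y|>\varepsilon}K(x-y)\,d\mu y\,d\mu x .
\end{equation*}
Two structural facts drive everything. First, $J_\varepsilon$ is antisymmetric, $J_\varepsilon(V,W)=-J_\varepsilon(W,V)$, by the antisymmetry of $K$ together with Fubini; in particular $J_\varepsilon(V,V)=0$ for every $\varepsilon$, so the ``diagonal'' overlap never contributes. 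Second, since $\mu$ is finite and $|K(x-y)|\le C_0^K$ for $|x-y|\ge1$, the part of $J_\varepsilon$ with $|x-y|\ge 1$ is, for $\varepsilon<1$, a fixed absolutely convergent integral independent of $\varepsilon$; hence only the near-diagonal part can fail to converge, and if $\overline V\cap\overline W=\emptyset$ the limit exists trivially. The whole problem is thus to control the interaction living near $\overline V\cap\overline W$.

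I would then cut space by a finite family of hyperplanes --- for rectangles, those carrying the faces of $A$ and $B$; for balls, a fixed finite family chosen so that every spherical cap of $\partial A,\partial B$ it produces is a Lipschitz graph of controlled constant --- together with a large ball containing $A\cup B$. This yields finitely many cells $V_k$ on which $\chi_A$ and $\chi_B$ are constant, and, writing $a_k,b_k\in\{0,1\}$ for these values,
\begin{equation*}
J_\varepsilon(A,B)=\sum_{k,l}a_k b_l\,J_\varepsilon(V_k,V_l)=\sum_{k<l}(a_k b_l-a_l b_k)\,J_\varepsilon(V_k,V_l),
\end{equation*}
the last step using $J_\varepsilon(V_k,V_l)=-J_\varepsilon(V_l,V_k)$ and $J_\varepsilon(V_k,V_k)=0$. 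It suffices to show $\lim_{\varepsilon\to0}J_\varepsilon(V_k,V_l)$ exists for each pair, and the pairs split into three types: cells at positive distance (trivial); cells sharing a codimension-one facet; and cells whose closures meet only in a set $F$ of dimension $\le n-2$.

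For a facet-sharing pair, $V_k$ and $V_l$ lie on opposite closed sides of a hyperplane $P$ (a cap-graph $C_f$ in the ball case) and fill those sides in a neighbourhood of the relative interior of the shared face $\Phi$. Here I would invoke Theorem \ref{bli}: after a rotation $R$ turning $P$ into the graph of a Lipschitz function --- which replaces $K$ by $K\circ R\in\mathcal K$ and $\mu$ by a pushforward still in $\Delta$ --- the limit of $J_\varepsilon(H_f^-,\Rn\setminus H_f^-)$ exists. Expanding this half-space interaction as $J_\varepsilon(V_k,V_l)$ plus three remainders in which at least one variable avoids $V_k$ or $V_l$, the filling property forces the near-diagonal part of each remainder to concentrate near $\partial\Phi$, a set of dimension $\le n-2$; so the remainders are of the third type and converge, whence $J_\varepsilon(V_k,V_l)$ does too. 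The third type is handled by absolute convergence: from $\mu\in\Delta$ and dyadic summation at small scales one gets $\int_{V_k}|x-y|^{-(n-1)}\,d\mu y \lesssim \log\bigl(1/\operatorname{dist}(x,V_k)\bigr)$, and the angular separation at a codimension-$\ge2$ contact gives $\operatorname{dist}(x,V_k)\ge c\,\operatorname{dist}(x,F)$; combined with $\mu(\{x:\operatorname{dist}(x,F)\le t\})\le C t$, which reflects the $(n-2)$-dimensionality of $F$, this yields $\int_{V_l}\int_{V_k}|K(x-y)|\,d\mu y\,d\mu x<\infty$.

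The main obstacle is organising the decomposition so that every near-diagonal interaction is genuinely of one of these three kinds, and in particular making the comparison with Theorem \ref{bli} legitimate. For rectangles this is clean, since all faces are flat and the cells of a hyperplane arrangement automatically fill across shared facets. For balls the difficulty concentrates in turning the caps of $\partial A,\partial B$ into Lipschitz graphs and extending each such graph outside its cap so that the two adjacent cells sit on the correct closed sides $\overline{H_f^\pm}$ globally; the remaining discrepancies between a cell and the full half-space are then pushed either to positive distance or to codimension-$\ge2$ edges (the sphere--sphere intersections), where the preceding estimate applies. Verifying these containments and the filling property at the edges of the facets is the technical heart of the argument; once it is in place, $\lim_{\varepsilon\to0}J_\varepsilon(A,B)$ is a finite sum of existing limits, which proves the theorem.
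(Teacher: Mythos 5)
Your strategy is viable and rests on the same two pillars as the paper's argument --- antisymmetry to kill the diagonal, and Theorem \ref{bli} (after a rotation, applied to a restriction of $\mu$) to handle mass separated by a Lipschitz graph --- but your decomposition is genuinely different and considerably heavier. The paper builds no cell complex: for single sets $P,Q$ it splits $\int_P\int_Q$ into $I_1,\dots,I_4$ according to membership in $P\cap Q$, kills $I_1$ by antisymmetry, reduces $I_3$ to $I_2$ by Fubini, and then uses one geometric fact: $\Rn\setminus Q$ is a disjoint union of $2n$ Borel sets $A_i(Q)$, each contained in the open upper half $H^+_{F_i(Q)}$ of a rotated Lipschitz graph whose closed lower half contains all of $Q$. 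Every cross term $\int_{A_i(Q)\cap P}\int_{P\cap Q}$ or $\int_{A_i(Q)\cap P}\int_{Q\setminus P}$ is then directly an instance of Theorem \ref{bli} for the measure $\mu$ restricted to the two sets involved; there are no facet-sharing pairs to compare against a half-space and no codimension-two contacts to estimate. Your extra lemma for codimension-$\ge 2$ contacts is correct and self-contained (the bound $\mu(\{x:\operatorname{dist}(x,F)\le t\})\lesssim t$ does follow from $\mu\in\Delta$ by covering the $t$-neighbourhood of the compact $(n-2)$-dimensional set $F$ by about $t^{-(n-2)}$ balls of radius $t$, and the logarithmic bound plus the layer-cake formula gives absolute convergence), but in the paper's organisation it is never needed.

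The one step of yours whose stated justification is wrong is the remainder analysis for facet-sharing pairs. Writing $J_\eps(H_f^-,\Rn\setminus H_f^-)=J_\eps(V_k,V_l)+(\text{three remainders})$ with $V_k\subset H_f^-$ and $V_l\subset\Rn\setminus H_f^-$, the remainder $J_\eps\bigl(V_k,(\Rn\setminus H_f^-)\setminus V_l\bigr)$ contains the interaction of $V_k$ with whatever $\mu$-mass sits on the separating hypersurface $C_f$ itself; if the cells are open, that set contains all of $C_f$, whose closure meets $\overline{V_k}$ along the entire $(n-1)$-dimensional facet $\Phi$, not merely near $\partial\Phi$. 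Since $\mu\in\Delta$ may charge $C_f$ (e.g.\ $\mu=\HH^{n-1}\lfloor C_f$ locally), this remainder is not ``of the third type,'' and the claim that every remainder concentrates near a set of dimension $\le n-2$ fails. The gap is repairable --- adopt half-open cells so that the cell on the side $\Rn\setminus H_f^-$ absorbs $C_f$, matching the asymmetric convention in Theorem \ref{bli}, or peel off $\mu\lfloor C_f$ and treat it with Theorems \ref{mthm} and \ref{pv} plus dominated convergence exactly as in the proof of Theorem \ref{bli} (the paper itself quietly elides the analogous graph-mass terms) --- but it must be addressed. The ball case you rightly identify as the technical heart is precisely what the paper's $2n$-piece construction (Figure \ref{fig2}) is designed to dispose of in one stroke.
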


Theorem \ref{main} was proved in \cite{C2} for more
general kernels $K$ but under more restrictive porosity conditions
on the measure $\mu$. Further discussions on boundedness and convergence
properties of singular integrals with general measures can be found
for example in \cite{M}, \cite{MV}, \cite{T}, \cite{D3} and \cite{C1}.

Throughout this paper $A\lesssim B$ means $A\leq CB$ for some
constant $C$ depending only on the appropriate structural constants,
that is, the dimension $n$, the exponent $p$, the Lipschitz constants of
the Lipschitz graphs and the regularity constants $C_{\mu}$ of the measures.

We would like to thank the referee for some useful comments.

\section{$L^p(\nu)\rightarrow L^p(\mu)$ boundedness}
In this section we prove Theorems \ref{mthm} and \ref{bli}.
\begin{proof}[Proof of Theorem \ref{mthm}] 
Let $C>0$ be some constant such that
\begin{equation*}\mu(B(x,r))\leq Cr^{n-1}\text{ and }\nu(B(x,r))\leq
Cr^{n-1}\text{ for }x\in \Rn \text{ and }r>0.
\end{equation*}
Write $\mu=\mu_1+\mu_2$ and $\nu=\nu_1+\nu_2$ where $\mu_1=\mu
\lfloor C_f$ and $\nu_1=\nu \lfloor C_f$. By standard differentiation
theory of measures, see, e.g., \cite{M}, Section 2,
the measures $\mu_1$ and $\nu_1$ are absolutely continuous
with respect to $\sigma =\mathcal{H}^{n-1}\lfloor C_f$ with bounded
Radon-Nikodym derivatives. Hence there exist Borel functions $h_\mu$ and
$h_\nu$ such that  $0\leq h_\mu\lesssim 1$ and $0\leq h_\nu\lesssim 1$ and that
$$d\mu_1=h_\mu d\sigma \text{ and }d\nu_1=h_\nu d\sigma.$$
By Theorems \ref{bl} and \ref{dav} we have for $g\in L^p(\nu)$,
\begin{equation*}
\begin{split}
\int (T_{\nu_1 }^{\ast }g)^p d\mu_1&=\int (T_{\sigma }^{\ast
}(gh_\nu))^ph_\mu d\sigma \lesssim \int |gh_\nu|^p d\sigma \\
&\lesssim\int |g|^p h_\nu d\sigma=\int |g|^p d\nu_1\leq \int |g|^p
d\nu,
\end{split}
\end{equation*}
\begin{equation*}
\int (T_{\nu_1 }^{\ast }g)^p d\mu_2=\int (T_{\sigma }^{\ast
}(gh_\nu))^p d\mu_2 \lesssim \int |gh_\nu|^p d\sigma \leq \int |g|^p
d\nu,
\end{equation*}
and
\begin{equation*}
\int (T_{\nu_2 }^{\ast }g)^p d\mu_1\lesssim \int (T_{\nu_2 }^{\ast
}(g))^p d\sigma \lesssim \int |g|^p d\nu_2\leq \int |g|^p d\nu.
\end{equation*}
As $T_{\nu }^{\ast }\leq T_{\nu_1 }^{\ast }+T_{\nu_2 }^{\ast }$ we
may thus assume that  $\mu=\mu_2$ and $\nu=\nu_2$, that is,
$\mu(H_f^{-}\cup C_f)=\nu(H_f^{+}\cup
C_f)=0$, and also that $g(x)=0$ for $x \in H_f^{+}\cup C_f$.

Let $L>\textmd{max}\{1,\textmd{Lip}(f)\}$. For $x_0=(u_0,f(u_0))\in C_f$
define the cone
$$\Gamma(x_0)=\{(u,t)\in \Rn: t-f(u_0)>4L|u-u_0| \},$$
and observe that
\begin{equation}
\label{cone} |y-x|\geq \frac{1}{8L}|y-x_0| \text { for }y \in \Gamma(x_0),x\in
H_f^{-}.
\end{equation}

We define the non-tangential maximal function $N(g)$ for any
function $g:\R^n\rightarrow \overline{\R}$ by
\begin{equation*}
N(g)(x)=\sup \{|g(y)|:y\in \Gamma(x)\}.
\end{equation*}
For the maximal function $N(g)$, the following $L^p$ estimate holds.
\begin{lm}
\label{nc} For any $0 <p
<\infty$, and any $\mu$ measurable function $g:\R^n\rightarrow \overline{\R}$,
$$\int |g|^p d\mu \lesssim\int_{C_f} N(g)^p d\mathcal{H}^{n-1}.$$
\end{lm}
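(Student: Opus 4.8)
The plan is to reduce the $L^p$ inequality to a single distributional estimate and then to exploit the $(n-1)$-growth of $\mu$ through a Whitney decomposition. Writing $\sigma=\mathcal H^{n-1}\lfloor C_f$ and using the layer-cake formula $\int|g|^p\,d\mu=p\int_0^\infty\lambda^{p-1}\mu(\{|g|>\lambda\})\,d\lambda$ (and similarly for $\int_{C_f}N(g)^p\,d\sigma$), it suffices to prove the bound
\[
\mu(\{y:|g(y)|>\lambda\})\ \lesssim\ \sigma(\{x_0\in C_f:N(g)(x_0)>\lambda\})\qquad(\lambda>0),
\]
with a constant independent of $\lambda$ and $g$; integrating in $\lambda$ then yields the lemma for every $0<p<\infty$. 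Recall that after the reduction carried out in the proof of Theorem~\ref{mthm} we may assume $\mu(H_f^-\cup C_f)=0$, so that $\mu$ is carried by the open region $H_f^+$.

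For the distributional bound I would first record the elementary geometric fact that lets the cones "see" all of $\mu$. If $y=(u,t)\in H_f^+$ and $d=t-f(u)>0$ is its height above the graph, then for every $u_0\in\hn$ with $|u-u_0|<d/(5L)$ the Lipschitz estimate $|f(u)-f(u_0)|\le L|u-u_0|$ gives $t-f(u_0)\ge d-L|u-u_0|>4L|u-u_0|$, i.e.\ $y\in\Gamma((u_0,f(u_0)))$ and hence $N(g)((u_0,f(u_0)))\ge|g(y)|$. Consequently, setting $U_\lambda=\{x_0\in C_f:N(g)(x_0)>\lambda\}$ and letting $O_\lambda\subset\hn$ be the projection of $U_\lambda$ onto the first $n-1$ coordinates, each $y=(u,t)$ with $|g(y)|>\lambda$ forces the whole ball $B(u,d/(5L))$ into $O_\lambda$; thus $u\in O_\lambda$ and $d\le 5L\,\delta(u)$ with $\delta(u)=\operatorname{dist}(u,\hn\setminus O_\lambda)$. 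In other words $\{|g|>\lambda\}$ is contained in the tent
\[
W_\lambda=\{(u,t):u\in O_\lambda,\ f(u)<t\le f(u)+5L\,\delta(u)\}.
\]

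It then remains to show $\mu(W_\lambda)\lesssim|O_\lambda|\approx\sigma(U_\lambda)$, the last comparison being the standard fact that on a Lipschitz graph $\mathcal H^{n-1}$ and Lebesgue measure on the projection differ only by a factor controlled by $L$. Here I would take a Whitney decomposition $O_\lambda=\bigcup_jQ_j$ into essentially disjoint cubes with $\ell_j:=\diam Q_j\approx\operatorname{dist}(Q_j,\hn\setminus O_\lambda)$, so that $\delta(u)\lesssim\ell_j$ for $u\in Q_j$. Over each $Q_j$ the corresponding slab of $W_\lambda$ has diameter $\lesssim L\ell_j$ (the oscillation of $f$ over $Q_j$ is at most $L\ell_j$ and the vertical extent is $\lesssim L\ell_j$), hence lies in a single ball of radius $\lesssim L\ell_j$; the growth hypothesis $\mu\in\Delta$ gives $\mu(W_\lambda\cap(Q_j\times\R))\lesssim\ell_j^{\,n-1}\approx|Q_j|$. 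Summing over the disjoint cubes yields $\mu(W_\lambda)\lesssim\sum_j|Q_j|=|O_\lambda|$, which is the desired distributional inequality.

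I expect the main obstacle to be this last step: a naive annular decomposition around a single graph point diverges because the tent $W_\lambda$ is unbounded, and it is precisely the Whitney control $d\le 5L\,\delta(u)$—the fact that the height of $\mu$'s carrier above the graph is dominated by the distance to the complement of $O_\lambda$—that tames the summation and makes the $(n-1)$-growth of $\mu$ reproduce the full area $|O_\lambda|$ rather than overcounting it. Minor technical points to settle along the way are the lower semicontinuity (hence measurability) of $N(g)$, so that $U_\lambda$ and $O_\lambda$ are measurable, and keeping explicit track of the dependence of the constants on $L$ and $C_\mu$.
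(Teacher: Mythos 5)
Your argument is correct and is essentially the proof the paper has in mind: the paper disposes of this lemma by noting that $\mu$ is a Carleson measure in $H_f^{+}$ and citing the standard argument from \cite{Tor}, which is precisely the distributional inequality $\mu(\{|g|>\lambda\})\lesssim \mathcal H^{n-1}(\{x\in C_f: N(g)(x)>\lambda\})$ obtained by trapping $\{|g|>\lambda\}$ in a tent over the open set where $N(g)>\lambda$ and summing the growth bound over a Whitney decomposition, exactly as you write. The only points left to pin down are the ones you already flag (lower semicontinuity of $N(g)$, the trivial case where the projected set is all of $\hn$, and the comparability of $\mathcal H^{n-1}\lfloor C_f$ with Lebesgue measure under projection), none of which causes any difficulty.
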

This follows from the fact that $\mu$ is a Carleson measure in
$H_f^{+}$, i.e.,
$$\mu(B(x,r))\leq C\mathcal{H}^{n-1}(C_f\cap B(x,r))\text{ for }x\in C_f,r>0.$$
A simple proof is given in \cite{Tor} for the case where
$C_f={\R}^{n-1}$ but the same argument holds for general $C_f$.

\begin{lm}
\label{l2} For any $g \in L^1(\nu)$ and any $x \in  C_f$,
$$N(T_{\nu}^{\ast }g)(x)\leq C_N(T_{\nu}^{\ast }g(x)+M_\nu g(x))$$
where
$$M_\nu g(x)=\sup_{r>0}r^{1-n}\int_{B(x,r)}|g|d\nu$$
and $C_N$ depends only on $n,L$ and $C$.
\end{lm}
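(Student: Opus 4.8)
The plan is to fix $x_0\in C_f$ and a point $z\in\Gamma(x_0)$, set $d=|z-x_0|$, and establish the pointwise bound $|T_\nu^{\ast}g(z)|\lesssim T_\nu^{\ast}g(x_0)+M_\nu g(x_0)$; more precisely I will bound each truncation $|T_\nu^{\varepsilon}g(z)|$ uniformly in $\varepsilon>0$, with constants depending only on $n,L$ and $C$, and then take the supremum first over $\varepsilon$ and then over $z\in\Gamma(x_0)$ to obtain $N(T_\nu^{\ast}g)(x_0)$. After the reduction carried out in the proof of Theorem \ref{mthm} the measure $\nu$ is concentrated on $H_f^-$, so every integration variable $y$ may be taken in $H_f^-$. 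The enabling geometric fact will be that $|z-y|\approx d+|x_0-y|$ for such $y$: the lower bound $|z-y|\geq\frac1{8L}|x_0-y|$ is exactly \eqref{cone}, the bound $|z-y|\grtrsim d$ follows by treating $|x_0-y|\geq d/2$ (via \eqref{cone}) and $|x_0-y|<d/2$ (via the triangle inequality) separately, and the upper bound is immediate.

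The next step is to compare the truncation at $z$ with one at $x_0$ by splitting at the radius $r_0=2\max(d,\varepsilon)$. On the far region $\{|x_0-y|>r_0\}$ one has $|z-y|\geq|x_0-y|-d>\varepsilon$, so the constraint $|y-z|>\varepsilon$ is automatic there, and the far part of $T_\nu^{\varepsilon}g(z)$ is simply $\int_{|x_0-y|>r_0}K(z-y)g\,d\nu$. On the complementary near region $\{|x_0-y|\leq r_0\}\cap\{|y-z|>\varepsilon\}$ one has $|z-y|\geq\max(\varepsilon,\tfrac{d}{16L})\grtrsim r_0$, hence $|K(z-y)|\lesssim r_0^{-(n-1)}$, and since $\int_{B(x_0,r_0)}|g|\,d\nu\leq r_0^{n-1}M_\nu g(x_0)$ this near part is $\lesssim M_\nu g(x_0)$. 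I then write the far part as
\[
\int_{|x_0-y|>r_0}K(x_0-y)g\,d\nu+\int_{|x_0-y|>r_0}\bigl(K(z-y)-K(x_0-y)\bigr)g\,d\nu,
\]
where the first summand equals the truncation $T_\nu^{r_0}g(x_0)$ and is therefore bounded by $T_\nu^{\ast}g(x_0)$, leaving the second summand $E$ as the remaining error.

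The main obstacle, and the only genuinely quantitative step, is the bound $|E|\lesssim M_\nu g(x_0)$. Since $|x_0-y|>r_0\geq 2d$, every point $\zeta$ on the segment $[x_0,z]$ satisfies $|\zeta-y|\geq|x_0-y|/2$, so the mean value theorem applied along that segment together with the gradient bound in the definition of $\mathcal K$ gives $|K(z-y)-K(x_0-y)|\leq|z-x_0|\sup_\zeta|\nabla K(\zeta-y)|\lesssim d\,|x_0-y|^{-n}$. Decomposing the far region into the dyadic annuli $A_k=\{2^kr_0\leq|x_0-y|<2^{k+1}r_0\}$, $k\geq0$, and using $\int_{A_k}|g|\,d\nu\leq(2^{k+1}r_0)^{n-1}M_\nu g(x_0)$, the contribution of $A_k$ to $E$ is $\lesssim d\,(2^kr_0)^{-n}(2^kr_0)^{n-1}M_\nu g(x_0)=\tfrac{d}{r_0}2^{-k}M_\nu g(x_0)\leq 2^{-k}M_\nu g(x_0)$, and summing the geometric series over $k\geq0$ yields $|E|\lesssim M_\nu g(x_0)$. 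Collecting the near-part, the $T_\nu^{r_0}g(x_0)$ term, and the estimate for $E$ gives $|T_\nu^{\varepsilon}g(z)|\lesssim T_\nu^{\ast}g(x_0)+M_\nu g(x_0)$ uniformly in $\varepsilon$ and $z$, whence taking suprema completes the argument. The delicate point to get right is the choice of comparison radius: taking $r_0=2\max(d,\varepsilon)$ is what makes the $\varepsilon$-truncation disappear on the far region and keeps $d/r_0\leq\frac12$, so that no separate analysis of the cases $\varepsilon\leq d$ and $\varepsilon>d$ is needed.
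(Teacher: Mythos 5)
Your proof is correct and follows essentially the same Cotlar-type argument as the paper: the cone separation (\ref{cone}) controls the near region, the Mean Value Theorem gradient bound summed over dyadic annuli controls the far region, and a genuine truncated integral at the apex is absorbed into $T_{\nu}^{\ast}g(x_0)$. The only difference is organizational: you compare $T_{\nu}^{\varepsilon}g(z)$ with the single adapted truncation $T_{\nu}^{r_0}g(x_0)$, $r_0=2\max(d,\varepsilon)$, which merges into one step the paper's two cases $\varepsilon<r$ and $\varepsilon\geq r$.
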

\begin{proof} Let $y \in \Gamma(x)$ and $\varepsilon>0$.
We will estimate $|T_\nu^{\varepsilon}g(y)|$ by dividing the
argument to two cases. Let $r=|x-y|$ and assume first that
$\varepsilon <r$. Then
\begin{eqnarray*}
\left| T_{\nu }^{\varepsilon }g(x)-T_{\nu }^{\varepsilon
}g(y)\right|  &=&\left| \int_{\Rn \setminus B(x,\varepsilon
)}K(x-z)g(z)d\nu
z-\int_{\Rn \setminus B(y,\varepsilon )}K(y-z)g(z)d\nu z\right|  \\
& \leq & \int_{\Rn \setminus B(x,2r)}\left| K(x-z)-K(y
-z)\right||g(z)| d\nu z  \\
&&+\int_{H_f^{-}\cap B(x,2r)}|K(y-z)||g(z)|d\nu z \\
&&+\left|\int_{B(x,2r)\backslash B(x,\varepsilon
)}K(x-z)g(z)d\nu z\right|  \\
\end{eqnarray*}
We estimate the first integral by integrating over the annuli
$B(x,2^{i}r)\setminus B(x,2^{i-1}r),i\in \mathbb{N},i\geq 2$. By the
Mean Value Theorem we derive that
\begin{eqnarray*}
\left| K(x-z)-K(y-z)\right|  &\leq &\left| \nabla K(\xi
(z))\right| \left|x-y\right|  \\
&\leq &\frac{C_{1}^K\left| x-y\right| }{\left| \xi (z)\right| ^{n}}
\end{eqnarray*}
where $\xi (z)$ lies in the line segment joining $y-z$ to $x-z$.
Furthermore for $i\in \mathbb{N},i\geq 2,$ and $z\in
B(x,2^{i}r)\backslash B(x,2^{i-1}r)$,
\begin{equation*}
\begin{split}
| \xi (z)|  &\geq | x-z| -| \xi(z)-(x-z)| \\
&\geq |x-z| -|(y-z)-(x-z)|  \\
&\geq 2^{i-2}r.
\end{split}
\end{equation*}
Hence
\begin{equation*}
\begin{split}
\int_{\Rn \setminus B(x,2r)}\left| K(x-z)-K(y-z)\right|&|g(z)| d\nu
z \\
&\leq \sum_{i=2}^{\infty}\int_{B(x,2^{i}r)\backslash
B(x,2^{i-1}r)}\frac{C_{1}^K\left| x-y\right| }{\left| \xi (z)\right|
^{n}}|g(z)|d\nu z \\
&\leq
4^{n}C_{1}^K\sum_{i=2}^{\infty}2^{-i}\frac{1}{(2^ir)^{n-1}}\int_{B(x,2^{i}r)}|g(z)|d\nu
z \\
&\leq 4^{n}C_{1}^K M_\nu g(x).
\end{split}
\end{equation*}
For the second integral, using (\ref{cone}) we estimate,
\begin{equation*}
\begin{split}
\int_{H_f^{-}\cap B(x,2r)}|K(y-z)||g(z)|d\nu z &\leq C_0^K
\int_{H_f^{-}\cap B(x,2r)}|y-z|^{1-n}|g(z)|d\nu z \\
&\leq (16L)^{n-1}C_0^K (2r)^{1-n}\int_{B(x,2r)}|g(z)|d\nu z\\
&\leq (16L)^{n-1}C_0^K M_\nu g(x)
\end{split}
\end{equation*}
Obviously the third integral is bounded by $2T_{\nu}^{\ast }g(x)$.
Therefore,
\begin{equation}
\label{fes} \left|T_{\nu }^{\varepsilon }g(y)\right|\leq
3\left|T_{\nu }^{\ast}g(x)\right|+D_1 M_\nu g(x)
\end{equation}
where $D_1=4^{n}C_{1}^K+(16L)^{n-1}C_0^K$.

Secondly, suppose that $\varepsilon \geq r$. Then
\begin{eqnarray*}
\left| T_{\nu }^{\varepsilon }g(x)-T_{\nu }^{\varepsilon
}g(y)\right|  &=&\left| \int_{\Rn \setminus B(x,\varepsilon
)}K(x-z)g(z)d\nu
z-\int_{\Rn \setminus B(y,\varepsilon )}K(y-z)g(z)d\nu z\right|  \\
& \leq & \int_{\Rn \setminus B(x,2\varepsilon)}\left| K(x-z)-K(y
-z)\right||g(z)| d\nu z  \\
&&+\int_{B(x,2\varepsilon)\setminus B(y,\varepsilon)}|K(y-z)||g(z)|d\nu z \\
&&+\left|\int_{B(x,2\varepsilon)\setminus B(x,\varepsilon)}K(x-z)g(z)d\nu z\right|  \\
\end{eqnarray*}
Exactly as before
$$\int_{\Rn \setminus B(x,2\varepsilon)}\left| K(x-z)-K(y
-z)\right||g(z)| d\nu z\leq 4^{n}C_{1}^K M_\nu g(x),$$
$$\int_{B(x,2\varepsilon)\setminus
B(y,\varepsilon)}|K(y-z)||g(z)|d\nu z\leq 2^{n-1}C_0^K M_\nu g(x)$$
and $$\left|\int_{B(x,2\varepsilon)\setminus
B(x,\varepsilon)}K(x-z)g(z)d\nu z\right| \leq 2T_{\nu}^{\ast
}g(x).$$ Therefore,
\begin{equation}
\label{ses} \left|T_{\nu }^{\varepsilon }g(y)\right|\leq
3\left|T_{\nu }^{\ast}g(x)\right|+D_2 M_\nu g(x)
\end{equation}
where $D_2=4^{n}C_{1}^K+2^{n-1}C_0^K.$ Choosing $C_N=D_1$ and
combining (\ref{fes}) and (\ref{ses}) we complete the proof the
Lemma \ref{l2}.
\end{proof}
We can now proceed and finish the proof of Theorem \ref{mthm}. By
Lemmas \ref{nc} and \ref{l2}, Theorems \ref{bl} and \ref{dav}, the $L^p$-boundedness
of $M_{\nu+\sigma}$ (see, e.g., \cite{M}, Theorem 2.19) and the fact that
$g(x)=0$ for $x\in C_f$,
\begin{equation*}
\begin{split}
\int(T_{\nu}^{\ast }g)^p d\mu &\lesssim \int N(T_{\nu}^{\ast }g)^p
d\sigma \\
&\lesssim \int (T_{\nu}^{\ast }g)^p d\sigma+ \int (M_\nu g)^p
d\sigma\\
&\lesssim \int |g |^p d\nu+ \int (M_{\nu+\sigma} g)^p
d(\nu+\sigma)\\
&\lesssim \int |g |^p d\nu+\int |g |^p d(\nu+\sigma)\\
&=2\int |g |^p d\nu.
\end{split}
\end{equation*}
The proof is finished.
\end{proof}

\begin{proof}[Proof of Theorem \ref{bli}]
Denote $\nu=\mu\lfloor H_f^-$ and $\lambda=\mu\lfloor(H_f^+\cup C_f)$.
By Theorem \ref{mthm}
\begin{equation*}
T_{\nu }^{\ast }:L^{2}(\nu )\rightarrow L^{2}(\lambda )
\end{equation*}
is bounded.  Therefore by H\"{o}lder's inequality

$$\int T_{\nu }^{\ast }(1)d\lambda  \leq
\| T_{\nu }^{\ast}(1)\| _{L^{2}(\lambda )}\| 1\| _{L^{2}(\lambda )} \lesssim
\| T_{\nu }^{\ast }(1)\| _{L^{2}(\lambda )} \lesssim\| 1\| _{L^{2}(\nu )}<\infty.$$

For $z\in H_{f}^{+}$ the limit
\begin{equation*}
\underset{\varepsilon \rightarrow 0}{\lim }T_{\nu }^{\varepsilon
}(1)(z)
\end{equation*}
exists since $H_{f}^{+}\cap \textmd{spt}\nu=\emptyset $. Furthermore
by Theorem \ref{pv} the above limit also exists for $\mu $ almost
every $z\in C_f$. Thus by the Lebesgue dominated
convergence theorem we derive that the limit
\begin{equation*}
\underset{\varepsilon \rightarrow 0}{\lim }\int_{H_{f}^{+} \cup
C_{f}}T_{\nu }^{\varepsilon }(1)(z)d\mu z=\underset{\varepsilon
\rightarrow 0}{\lim }\underset{\left| x-y\right| >\varepsilon
}{\int_{\Rn\setminus H_{f}^{-}}\int_{H_{f}^{-}}}K(x-y)d\mu yd\mu x
\end{equation*}
exists and is finite, completing the proof of Theorem \ref{bli}.
\end{proof}
\emph{Remark.} As a corollary of Theorem \ref{bli} and Fubini's
theorem  we derive that the limit
\begin{equation*}
\underset{\varepsilon \rightarrow 0}{\lim }\underset{\left|
x-y\right|
>\varepsilon }{\int_{H_{f}^{+}}\int_{\Rn\backslash H_{f}^{+}}}K(x-y)d\mu
yd\mu x
\end{equation*}
exists under the same assumptions as in Theorem \ref{bli}.

\section{Weak Convergence in $\mathcal{X}_{B}(\mathbb{R}^{n})$ and
$\mathcal{X}_{Q}(\mathbb{R}^{n})$}

To prove Theorem \ref{main} let $f,g\in
\mathcal{X}_{Q}(\mathbb{R}^{n})$ or $f,g\in
\mathcal{X}_{B}(\mathbb{R}^{n})$ be such that
\begin{equation*}
f=\overset{l}{\underset{i=1}{\sum }}a_{i}\chi _{Q_{i}}\text{ and }g=\overset{%
m}{\underset{j=1}{\sum }}b_{j}\chi _{P_{j}},
\end{equation*}
where $a_{i},b_{j}\in \mathbb{R}$ and $Q_{i},P_{j}$ are closed balls or
$Q_{i},P_{j}$ are closed rectangles. Then for $\varepsilon >0$,

\begin{equation*}
\int T_{\mu }^{\varepsilon }f(x)g(x)d\mu
x=\overset{m}{\underset{j=1}{\sum }}\overset{l}{\underset{i=1}{\sum
}}b_{j}a_{i}\underset{\left| x-y\right|
>\varepsilon }{\int_{P_{j\text{ }}}\int_{Q_{i}}}K(x-y)d\mu yd\mu x.
\end{equation*}
Therefore it is enough to show that for balls $P,Q$ or rectangles $P,Q$ the limit
\begin{equation*}
\underset{\varepsilon \rightarrow 0}{\lim }\underset{\left|
x-y\right|
>\varepsilon }{\int_{P}\int_{Q}}K(x-y)d\mu yd\mu x
\end{equation*}
exists. But,
\begin{equation*}
\underset{\left| x-y\right| >\varepsilon
}{\int_{P}\int_{Q}}K(x-y)d\mu yd\mu x =I_1+I_2+I_3+I_4,
\end{equation*}
where,
\begin{eqnarray*}
I_1&=&\underset{\left| x-y\right| >\varepsilon }{\int_{P\cap Q}\int_{P\cap Q}}%
K(x-y)d\mu yd\mu x, \\
I_2&=&\underset{\left| x-y\right| >\varepsilon }{\int_{P\backslash
Q}\int_{P\cap Q}}K(x-y)d\mu yd\mu x, \\
I_3&=&\underset{\left| x-y\right| >\varepsilon }{\int_{P\cap
Q}\int_{Q\backslash P}}K(x-y)d\mu yd\mu x, \\
I_4&=&\underset{\left| x-y\right| >\varepsilon }{\int_{P\backslash
Q}\int_{Q\backslash P}}K(x-y)d\mu yd\mu x.
\end{eqnarray*}
By the antisymmetry of $K$, for every $\varepsilon>0$,
\begin{equation*}
I_1=0.
\end{equation*}
Furthermore by Fubini's theorem $I_3$ is essentially the same with
$I_2$, allowing us to treat only $I_2$ and $I_4$. In that direction
notice that for every rectangle, or ball, say $P$, there exist some
collection of rotations of Lipschitz graphs
$\{F_{i}(P)\}_{i=1}^{2n}$, and disjoint Borel sets
$\{A_{i}(P)\}_{i=1}^{2n}$, such that
\begin{eqnarray*}
\Rn \setminus P&=&\cup_{i=1}^{2n} A_{i}(P),\\
P&\subset &H_{F_{i}(P)}^{-}\cup F_{i}(P),\\
A_{i}(P)&\subset &H_{F_{i}(P)}^{+}.
\end{eqnarray*}
\begin{figure}
\centering
\includegraphics[scale = 0.3]{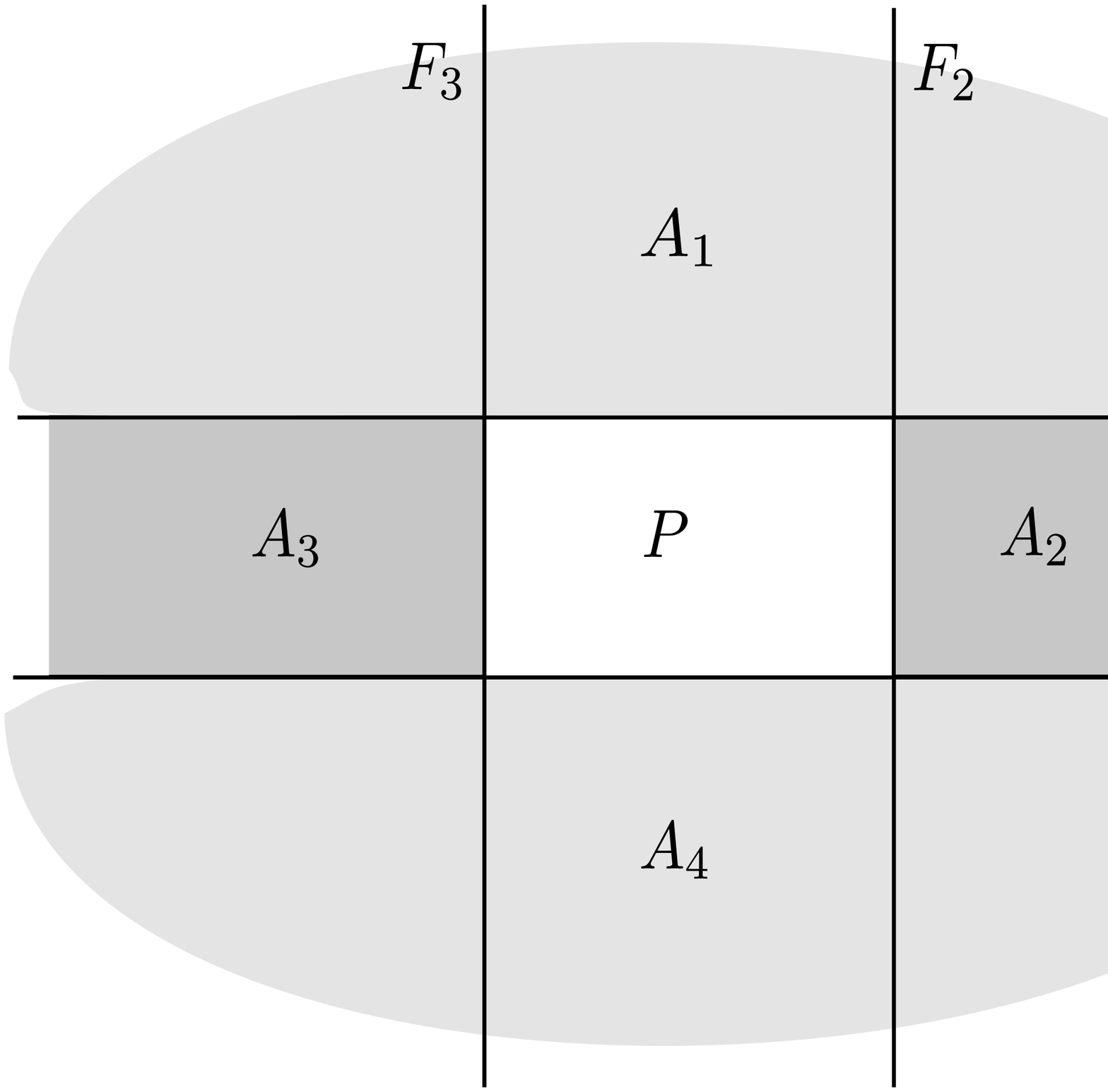}
\caption[]{}\label{fig2}
\end{figure}
See Figure \ref{fig2} for an illustration in the case when $P$ is a
subset of the plane. Using the above geometric property $I_2$ and
$I_4$ can be decomposed in the following way,
\begin{equation*}
I_2=\sum_{i=1}^{2n}\underset{\left| x-y\right|
>\varepsilon }{\int_{A_{i}(Q)\cap P}\int_{P\cap Q}}K(x-y)d\mu yd\mu
x
\end{equation*}
and
\begin{equation*}
I_4=\sum_{i=1}^{2n}\underset{\left| x-y\right|
>\varepsilon }{\int_{A_{i}(Q)\cap P}\int_{Q \setminus P}}K(x-y)d\mu yd\mu
x.
\end{equation*}
Therefore since limits like
\begin{equation*}
\lim_{\varepsilon \rightarrow 0}\underset{\left| x-y\right|
>\varepsilon }{\int_{A_{i}(Q)\cap P}\int_{P\cap Q}}K(x-y)d\mu yd\mu
x
\end{equation*}
and
\begin{equation*}
\lim_{\varepsilon \rightarrow 0} \underset{\left| x-y\right|
>\varepsilon }{\int_{A_{i}(Q)\cap P}\int_{Q \setminus P}}K(x-y)d\mu yd\mu
x
\end{equation*}
exist by Theorem \ref{bli} we finally obtain Theorem \ref{main}.

\vspace{1cm}
\begin{footnotesize}
{\sc Department of Mathematics and Statistics,
P.O. Box 68,  FI-00014 University of Helsinki, Finland,}\\
\emph{E-mail addresses:} \verb"vasileios.chousionis@helsinki.fi",
\verb"pertti.mattila@helsinki.fi"

\end{footnotesize}

\end{document}